\DeclareMathOperator{\id}{id}
\DeclareMathOperator{\rk}{rk}
\newcommand{\bdy}{\ensuremath{\partial}}
\newcommand{\iso}{\ensuremath{\cong}}
\newcommand{\Z}[1][]{\ensuremath{\mathbb{Z}_{#1}}}
\newcommand{\Q}{\ensuremath{\mathbb{Q}}}
\newcommand{\N}{\ensuremath{\mathbb{N}}}
\newcommand{\F}{\ensuremath{\mathbb{F}}}
\newcommand{\nsgp}[1][]{\ensuremath{\triangleleft_{#1}}}
\newcommand{\gp}[1]{\ensuremath{\langle #1\rangle}}
\newcommand{\pRm}[2][R]{\ensuremath{#1[\![#2]\!]}}
\newtheorem{theorem}{Theorem}[section]
\newtheorem{prop}[theorem]{Proposition}
\theoremstyle{definition}
\newtheorem{defn}[theorem]{Definition}
\theoremstyle{remark}
\newtheorem*{rmk}{Remark}
\newtheorem{example}[theorem]{Example}
\theoremstyle{plain}
\newtheorem*{thmquoteA}{Theorem \ref{thmUBEG}}
\newtheorem*{thmquoteB}{Theorem \ref{thmInacc}}
\newtheorem*{thmquoteC}{Theorem \ref{ThmRespInacc}}
\newcounter{introthmcount}
\theoremstyle{definition}
\title{On accessibility for pro-$p$ groups}
\author{Gareth Wilkes}
\begin{document}
\maketitle
\begin{abstract}
We define the notion of accessibility for a pro-$p$ group. We prove that finitely generated pro-$p$ groups are accessible given a bound on the size of their finite subgroups. We then construct a finitely generated inaccessible pro-$p$ group, and also a finitely generated inaccessible discrete group which is residually $p$-finite.
\end{abstract}
\section{Introduction}
A common thread in mathematics is the search for conditions which tame wild objects in some way and allow deeper analysis of their structure. In the case of groups finiteness properties, finite dimension properties, and accessibility all fall into this thread. Accessibility is the name given to the following concept. Given a group, it may be possible to split the group over a finite subgroup into pieces which could be expected to be somehow `simpler'. For a notion of `simpler' to be of much help, this process should terminate---it should not be possible to continue splitting the group {\it ad infinitum}.  Once we have split as far as possible, the remaining pieces are one-ended and amenable to further study---in this way accessibility is the foundation of JSJ theory for groups.

We adopt the following definition of accessibility, which is well known to be equivalent to the original classical definition of Wall \cite{Wall71, SW79} in the case of (finitely generated) discrete groups.
\begin{defn}
Let $G$ be a discrete group. We say $G$ is {\em accessible} if there is a number $n=n(G)$ such that any finite, reduced graph of discrete groups $\cal G$ with finite edge groups having fundamental group isomorphic to $G$ has at most $|EX|\leq n$ edges.
\end{defn}
That is, `one cannot continually split $G$ over finite subgroups'. Examples of accessible discrete groups include the finitely generated torsion free groups (by Grushko's theorem), finitely generated groups with a bound on the size of their finite subgroups \cite{Linnell83}, and all finitely presented groups \cite{Dunwoody85}. However, contrary to a famous conjecture of Wall \cite{Wall71}, not all finitely generated groups are accessible \cite{Dunwoody93}.

Pro-$p$ groups are a class of compact topological groups with many pleasant features. See \cite{DdSMS03} for an introduction. Being `built out of' finite $p$-groups, the many special properties of $p$-groups may be used to analyse pro-$p$ groups. At the same time the category of pro-$p$ groups has surprising similarities to the category of discrete groups. There are close relations between the cohomology theories of the two classes \cite{SerreCohom}, and pro-$p$ groups have a good theory of actions on trees whose results parallel the Bass-Serre theory for discrete groups \cite{RZ00p, Ribes17}. In this way one can often prove results for pro-$p$ groups which parallel those for discrete groups, even when the discrete groups are studied geometrically so that the techniques of the proof do not necessarily translate over. Examples include, for instance, \cite{labute67, Lubotzky82, Wilkes17c}.

In this paper we will commence the study of accessibility for pro-$p$ groups. In Section \ref{secProperness} we will give formal definitions of graphs of pro-$p$ groups and their fundamental groups. We also study the concept of properness, a vital extra component in this theory not present in the discrete world. We then derive certain properties of these notions which are necessary and expedient for the rest of the paper. 

In Section \ref{secUBEG} we will prove a pro-$p$ version of Linnell's theorem \cite{Linnell83}.
\begin{thmquoteA}
Let $G$ be a finitely generated pro-$p$ group and let $K$ be an integer. Let $(X, G_\bullet)$ be a proper reduced finite graph of groups decomposition of $G$ such that each edge group $G_e$ has size at most $K$. Then $X$ has at most
\[\frac{pK}{p-1}(\rk(G)-1) + 1 \]
edges.
\end{thmquoteA}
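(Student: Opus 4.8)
The plan is to pass to a well-chosen open subgroup of $G$ where the edge groups disappear, reduce to a free pro-$p$ product (to which the elementary pro-$p$ analogue of Grushko's theorem applies), and then recover the sizes $|G_e|$ through index formulae. Concretely, I would first choose an open normal subgroup $U$ of $G$ meeting every edge group and every \emph{finite} vertex group trivially; such a $U$ exists because there are only finitely many conjugacy classes of these finite subgroups and $G$ is pro-$p$. Restricting the action of $G$ on the associated pro-$p$ tree $T$ to $U$, properness is inherited, so $U$ acts on $T$ with finite quotient $T_U = U\backslash T$ and \emph{trivial} edge stabilisers. By the structure theory for pro-$p$ groups acting on pro-$p$ trees, $U$ is then the free pro-$p$ product of its vertex stabilisers $U_w$ with a free pro-$p$ group of rank $b_1(T_U)$; since $\rk$ is additive over free pro-$p$ products (immediately, as $H^1(-,\F_p)$ sends coproducts to direct sums), this rearranges to the identity
\[ |EX_U| = (\rk(U) - 1) + \sum_w\big(1 - \rk(U_w)\big), \]
where $EX_U = E(T_U)$.

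The sizes enter through orbit counting. As $U$ is normal and meets each edge group trivially, the number of $U$-orbits of edges over $e \in EX$ is $[G:U]/|G_e|$, so $|EX_U| = [G:U]\sum_e 1/|G_e|$, while $|VX_U| = \sum_v [G:UG_v]$. Each infinite stabiliser $U_w$ is a nontrivial open subgroup of a conjugate of some $G_v$, hence has $\rk(U_w)\ge 1$ and contributes a non-positive term, whereas each of the $[G:U]/|G_v|$ orbits over a finite vertex contributes $1$. Combining these counts with the pro-$p$ Schreier bound $\rk(U)-1 \le [G:U](\rk(G)-1)$ (got by lifting $U$ through a free pro-$p$ presentation of $G$) and dividing by $[G:U]$ yields the key inequality
\[ \sum_{e\in EX}\frac{1}{|G_e|} \le (\rk(G)-1) + \sum_{v:\,G_v\text{ finite}}\frac{1}{|G_v|}. \]

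It remains to convert this into the stated edge bound by a combinatorial charging argument that uses reducedness. Fixing a spanning tree of $X$ with a root, reducedness forces $|G_v|\ge p\,|G_e|$ whenever $e$ is a non-loop edge incident to $v$; assigning to each non-root vertex its parent edge therefore gives $\sum_{v\text{ finite}}1/|G_v| \le \tfrac1p\sum_e 1/|G_e| + 1/|G_{\mathrm{root}}|$, with the final term absent whenever one may root at an infinite vertex. Substituting and rearranging gives $\tfrac{p-1}{p}\sum_e 1/|G_e| \le (\rk(G)-1) + (\text{root term})$; bounding $|EX| \le (\max_e|G_e|)\sum_e 1/|G_e|$, and in the remaining case (all vertices finite) estimating the surviving root term via the largest vertex group, whose order is at least $p\max_e|G_e|\ge \tfrac{p}{p-1}\max_e|G_e|$ by reducedness, then produces exactly
\[ |EX| \le \frac{pK}{p-1}(\rk(G)-1) + 1. \]

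I expect the main obstacle to lie in the first two steps rather than the combinatorial finish. The delicate points are all profinite: verifying that the restricted $U$-action is still proper, that the trivial-edge-group structure theorem and the additivity of $\rk$ apply verbatim to $U$, and above all that the orbit counts $[G:U]/|G_e|$ are correct for the profinite spaces of edges and vertices of $T$. The concluding estimate is elementary once the displayed inequality is available; the essential subtlety is that each appearance of the \emph{order} $|G_e|$, rather than merely $\rk(G_e)$, is manufactured by the index formulae, which is precisely why the passage to the open subgroup $U$ cannot be avoided.
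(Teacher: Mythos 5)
Your strategy is the same as the paper's: pass to an open normal subgroup $U$ meeting the edge groups trivially, use the induced action on the tree to get a decomposition of $U$ over a finite quotient graph with trivial edge groups, combine an Euler-characteristic count with the Schreier index bound $\rk(U)-1\le[G:U](\rk(G)-1)$, divide by $[G:U]$, and then absorb the negative vertex terms by a rooted-spanning-tree charging argument exploiting reducedness. Your handling of the infinite vertex groups is a genuine (small) improvement in bookkeeping: by noting $\rk(U_w)\ge 1$ for the infinite stabilisers you drop them from the negative sum entirely, whereas the paper keeps a term $1/|\phi(G_v)|$ for \emph{every} vertex and consequently must choose its finite quotient $\phi$ to separate $\phi(G_e)$ from $\phi(G_v)$ whenever $G_e\ne G_v$ --- a condition you do not need. (At the corresponding point the paper also gets by with less machinery: instead of the full free pro-$p$ product decomposition of $U$ it only uses the surjection of $U$ onto the free pro-$p$ group obtained by killing all vertex stabilisers, which yields $1-\chi(Y)\le\rk(U)$ directly.)

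The one step that does not work as written is the disposal of the root term. You root at the vertex with the largest group and assert that its order is at least $p\max_e|G_e|$ ``by reducedness''; but reducedness only forbids $\partial_{e,k}$ from being an isomorphism when $e$ is \emph{not} a loop, so if the maximal edge group belongs to a loop at $v$ you may have $|G_v|=\max_e|G_e|=K$ exactly, in which case $\frac{pK}{(p-1)|G_r|}=\frac{p}{p-1}>1$ and you only obtain $|EX|\le\frac{pK}{p-1}(\rk(G)-1)+\frac{p}{p-1}$, slightly weaker than claimed. The fix is the one the paper uses: do not multiply through by $\frac{p}{p-1}$ before treating the root. Keep the inequality in the form $\rk(G)-1\ge\sum_{e\notin Z}\frac{1}{|G_e|}+\sum_{e\in Z}\bigl(1-\frac1p\bigr)\frac{1}{|G_e|}-\frac{1}{|G_r|}$ and cancel the term $-\frac{1}{|G_r|}$ against the contribution of a single edge $e_0$ adjacent to $r$ (using $p|G_{e_0}|\le|G_r|$ if $e_0\in Z$ and $|G_{e_0}|\le|G_r|$ otherwise); the surviving $|EX|-1$ positive terms are each at least $\bigl(1-\frac1p\bigr)\frac1K$, which is exactly where the ``$+1$'' in the statement comes from.
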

In Section \ref{secInacc} we reach the primary purpose of the paper: the explicit construction of an inaccessible finitely generated pro-$p$ group.
\begin{thmquoteB}
There exists a finitely generated inaccessible pro-$p$ group $J$.
\end{thmquoteB}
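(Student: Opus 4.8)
The plan is to produce $J$ as a pro-$p$ analogue of Dunwoody's discrete inaccessible group \cite{Dunwoody93}: a single finitely generated pro-$p$ group equipped with, for every $n$, a proper reduced finite graph-of-($p$-)groups decomposition with finite edge groups and at least $n$ edges. Theorem~\ref{thmUBEG} is the first constraint to take into account, since it dictates the shape of any such example. A uniform bound $K$ on the edge-group orders would, together with finite generation, force a bound on the number of edges; hence in the decompositions I construct the finite edge groups must have orders tending to infinity. I therefore aim for a $J$ carrying an infinite family of proper reduced decompositions whose edge groups are finite $p$-groups of unbounded order and whose number of edges is unbounded.

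To build such decompositions I would work with finite $p$-groups as vertex and edge groups, amalgamated along chains of finite $p$-subgroups $C_1, C_2, \dots$ of strictly increasing order, and assemble them into a directed system of finite graphs of $p$-groups $\mathcal{G}_n$ with pro-$p$ fundamental groups $\Pi_n$. The group $J$ is then obtained as a limit of this system, and the decomposition $\mathcal{G}_n$ is arranged to be a decomposition of $J$ itself with at least $n$ edges. Reducedness is ensured by making every edge inclusion $C_i \hookrightarrow A$ into a vertex group proper, and the growth $|C_i|\to\infty$ is built in so as to be consistent with Theorem~\ref{thmUBEG}.

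The first real obstacle is finite generation of $J$. A naive increasing union $A_1 *_{C_1} A_2 *_{C_2}\cdots$ of finite $p$-groups is \emph{not} finitely generated once the inclusions $C_i\hookrightarrow A_{i+1}$ are proper (as reducedness requires), since in an amalgam the relation $A_{i+1}\cap\Pi_i = C_i$ forces each new factor to contribute genuinely new generators. Following Dunwoody, I would instead take the transition maps of the directed system to be non-injective, folding generators at each stage rather than taking the inclusions of a plain amalgam, so that $\rk(\Pi_n)$ stays bounded uniformly in $n$ and the limit $J$ is finitely generated, while the decompositions $\mathcal{G}_n$ of $J$ nonetheless remain reduced with a growing number of edges. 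Making this folding compatible with the growth $|C_i|\to\infty$ demanded by Theorem~\ref{thmUBEG} is the delicate combinatorial heart of the construction.

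The second obstacle, genuinely new to the pro-$p$ world, is properness: each decomposition $\mathcal{G}_n$ must be a \emph{proper} graph of pro-$p$ groups, for otherwise the vertex and edge groups need not embed and the splittings do not count towards (in)accessibility. Here I would invoke the properness criteria developed in Section~\ref{secProperness}, selecting the finite $p$-groups and the subgroup inclusions $C_i\hookrightarrow A$ so that the requisite injectivity and closedness conditions hold at every edge, and checking that properness is inherited in the limit. Once properness, reducedness, finite generation, and $|C_i|\to\infty$ are simultaneously in force, the family $\{\mathcal{G}_n\}_{n\geq 1}$ witnesses that $J$ admits proper reduced decompositions with arbitrarily many edges, so no bound $n(J)$ exists and $J$ is inaccessible.
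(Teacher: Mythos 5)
Your proposal correctly identifies the constraints (edge groups of unbounded order forced by Theorem \ref{thmUBEG}, reducedness, properness, finite generation), but it stops exactly where the proof has to begin: no groups are actually constructed, and the two steps you yourself flag as ``the delicate combinatorial heart'' and the properness verification are deferred rather than carried out. In the paper these steps are the entire content of Section \ref{secInacc}: one writes down explicit finite $p$-groups $H_n=\F_p[\{0,\dots,p^n-1\}]$, $K_n=\F_p\times H_n$ and Heisenberg-like groups $G_n$, together with compatible retractions $\rho_n\colon G_n\to K_{n-1}$, so that the chain graphs of groups $Q_{n,m}$ form an \emph{inverse} system along the induced retractions; properness is then certified by exhibiting concrete finite quotients ($F_n$ and $E_n$) into which all vertex groups inject, and is passed to the limit via Proposition \ref{invlimgofgs}. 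No general ``properness criterion'' will do this automatically: the paper warns (Example \ref{ArbLargeImprop}, and the two-edge example in Section \ref{secProperness}) that properness is a global property that fails for very natural-looking chains of Heisenberg-type amalgams, so the specific choice of inclusions is genuinely delicate and must be exhibited and checked.

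Two further points in your plan are not merely unexecuted but misdirected. First, your mechanism for finite generation --- keeping $\rk(\Pi_n)$ bounded by ``folding generators'' in the directed system --- is not substantiated and is not how the construction works: in the actual construction $\rk(P_n)\to\infty$, and finite generation is restored only at the end by amalgamating $P$ with a finitely generated pro-$p$ group $H_\omega=\pRm[\F_p]{\Z[p]}\rtimes\Z[p]$ over the closure $H_\infty$ of $\bigcup H_n$; this extra vertex is what makes $J$ generated by $G_1$ together with the $2$-generated group $H_\omega$. Your outline omits this amalgamation entirely. Second, you describe $J$ as ``a limit of the directed system,'' but in the pro-$p$ category one cannot form the fundamental group of an infinite graph of groups as a naive union: not every increasing union of finite $p$-groups embeds in a pro-$p$ group, and the closure of such a union depends on the ambient embedding (both pitfalls are illustrated by explicit examples in the paper). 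The construction must therefore be organised as an inverse limit of finite graphs of groups along retractions, which is why the maps $\rho_n$ and $\eta_n$ have to be built into the data from the outset. As it stands the proposal is a reasonable outline of the strategy, but it contains no proof.
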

Finally in Section \ref{secRespInacc} we note that our construction gives an inaccessible discrete group which is residually $p$-finite.
\begin{thmquoteC}
There exists a finitely generated, inaccessible, residually $p$-finite discrete group.
\end{thmquoteC}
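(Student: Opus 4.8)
The plan is to leverage Theorem~\ref{thmInacc}, which already produces a finitely generated inaccessible pro-$p$ group $J$. The key observation is that inaccessibility in the pro-$p$ world should descend from a discrete group whose pro-$p$ completion (or whose completion with respect to a suitable filtration) is $J$. Concretely, I would hope that in the course of constructing $J$ in Section~\ref{secInacc}, the group arises as the pro-$p$ completion $\proP{\Gamma}$ of an explicit finitely generated discrete group $\Gamma$, built as an infinite iterated amalgam or ascending union that is visibly residually $p$-finite. The task then is to transfer the inaccessibility of $J$ back to a statement about $\Gamma$ itself as a discrete group.

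First I would record that $\Gamma$ is residually $p$-finite by construction: if $\Gamma$ is built as an increasing union (or inverse-limit-compatible tower) of groups each of which is residually $p$-finite, and the building splittings are over finite $p$-subgroups, then the natural maps into the finite quotients separate points, giving residual $p$-finiteness and hence $\proP{\Gamma}\iso J$ (or at least a continuous surjection with the right properties). Second, I would take the infinite family of distinct reduced splittings of $J$ over finite edge groups that witness its inaccessibility, and pull each one back along the completion map $\Gamma\to J$. Because $\Gamma\to J$ is injective (residual $p$-finiteness) and the edge groups are finite $p$-groups sitting inside $\Gamma$, each pro-$p$ graph-of-groups decomposition of $J$ should restrict to a genuine Bass--Serre splitting of $\Gamma$ over a finite subgroup, yielding discrete graph-of-groups decompositions of $\Gamma$ with unboundedly many edges.

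The main obstacle will be the transfer step: there is no automatic correspondence between pro-$p$ splittings of $\proP{\Gamma}$ and discrete splittings of $\Gamma$, since completion can both create and destroy splittings. The cleanest route is therefore to arrange matters in the opposite logical order---to build $\Gamma$ \emph{first} as a discrete group that manifestly admits an unbounded sequence of reduced splittings over finite subgroups (so that $\Gamma$ is inaccessible as a discrete group essentially by the same combinatorial mechanism used for $J$), and then to verify residual $p$-finiteness as a separate, self-contained point. In that case the discrete inaccessibility is immediate from the construction and one only needs the residual property. I would prove residual $p$-finiteness by exhibiting the defining tower of $\Gamma$ as a tower of amalgamated products and HNN extensions over finite $p$-groups, and invoking the standard fact that such a construction, performed over a residually $p$-finite base with compatible $p$-power-index filtrations on the factors, remains residually $p$-finite---checking at each stage that the finite quotients can be chosen to realise the relevant finite edge groups faithfully.

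Finally I would assemble the two pieces: $\Gamma$ is finitely generated (a bounded number of generators suffices at every stage of the construction, so the union is finitely generated), $\Gamma$ is inaccessible as a discrete group because it carries reduced finite-edge-group splittings with arbitrarily many edges, and $\Gamma$ is residually $p$-finite by the tower argument. This yields the desired finitely generated, inaccessible, residually $p$-finite discrete group and completes the proof of Theorem~\ref{ThmRespInacc}.
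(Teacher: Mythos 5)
Your final route---build the discrete group first so that inaccessibility is immediate from Dunwoody's combinatorial mechanism, and then establish residual $p$-finiteness as a separate point---is exactly the architecture of the paper's proof, and you are right to abandon your first idea of pulling pro-$p$ splittings of the completion back to discrete splittings. The gap is in the residual $p$-finiteness step, which is where essentially all the content of this theorem lives. There is no ``standard fact'' that an ascending union of residually $p$-finite amalgams over finite $p$-groups is residually $p$-finite: each finite stage $P_n$ is virtually free and residually $p$-finite (it admits the finite $p$-quotient $F_n$ of Section \ref{subsecProper}, injective on the vertex groups), but a direct limit of residually $p$-finite groups need not be residually $p$-finite---Dunwoody's original inaccessible group is built from exactly such a tower and is not residually finite. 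A finite $p$-quotient of $P_n$ separating a given element has no reason to extend over $P_{n+1}$ once $G_{n+1}$ is glued on, so ``checking at each stage'' does not suffice.

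What the paper uses instead, and what your argument is missing, is the system of retractions built into the construction: $\rho_{n+1}\colon G_{n+1}\to K_n$ induces compatible retractions $P\to P_n$ and $H_\omega\to H_n\rtimes \Z/p^n$. Given $1\neq j\in J=P\ast_{H_\infty}H_\omega$ written as a reduced word, for $n$ large the induced map $J\to P_n\ast_{H_n}(H_n\rtimes \Z/p^n)$ fixes the $P$-letters and keeps the $H_\omega$-letters outside $H_n$, so it sends $j$ to a reduced word, hence to a nontrivial element; and this finite-stage amalgam is residually $p$-finite because the kernel of its map to the explicit finite $p$-group $E_n$ is free. Without these retractions (or some equivalent device for mapping $J$ \emph{onto} its finite stages rather than merely containing them) your tower argument does not close. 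A minor further point: your justification of finite generation is off---$P$ itself is not finitely generated; it is $J$ that is generated by $G_1$ and $H_\omega$, because $G_{n+1}$ is generated by $K_n$ and $H_{n+1}$---but this is the standard Dunwoody observation and easily repaired.
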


\section{Properness of graphs of pro-$p$ groups}\label{secProperness}
In this section we will recall various notions concerning graphs of pro-$p$ groups, and establish some properties of them which do not seem to appear in the standard literature. In this paper an (oriented) graph $X$ will consist of a set $VX$ of vertices, a set $EX$ of edges, and two functions $d_k\colon EX\to VX$ for $k\in\{0,1\}$ describing the endpoints of $e$. We will generally ignore the orientation when describing a graph, but it is occasionally useful when greater precision is necessary.
\begin{defn}
A graph ${\cal G}=(X,G_\bullet)$ of pro-$p$ groups consists of a finite graph $X$, a pro-$p$ group $G_x$ for each $x\in X$, and monomorphisms $\bdy_{e,k}\colon G_e\to G_{d_k(e)}$ for each edge $e\in EX$ and for $k=0,1$. A graph of groups is {\em reduced} if $\bdy_{e,k}$ is never an isomorphism when $e$ is not a loop.
\end{defn}
\begin{rmk}
One may define graphs of groups over arbitrary profinite graphs $X$ (see \cite[Chapter 6]{Ribes17}), but this adds an extra level of complication and subtlety which is not required in the present paper.
\end{rmk}
\begin{defn}\label{FGDefn}
The full definition of a fundamental group of an arbitrary graph of pro-$p$ groups may be found in \cite[Section 6.2]{Ribes17}. We give here a simplified version \cite[Example 6.2.3(c)]{Ribes17} appropriate for the study of finite graphs of pro-$p$ groups. 

Let ${\cal G}=(X,G_\bullet)$ be a finite graph of pro-$p$ groups and choose a maximal subtree $T$ of $X$. A {\em specialisation} of $\cal G$ is a triple $(H,\nu_\bullet, t_\bullet)$ consisting of a pro-$p$ group $H$, morphisms $\nu_x\colon G_x\to H$ for each $x\in X$, and elements $t_e\in H$ for $e\in EX$, such that:
\begin{itemize}
\item $t_e=1$ for all $e\in ET$
\item $\nu_{d_0(e)}(\bdy_{e,0}(g)) = \nu_e(g) = t_e\nu_{d_1(e)}(\bdy_{e,1}(g))t_e^{-1} $ for all $e\in EX, g\in G_e$.
\end{itemize}

A {\em pro-$p$ fundamental group} of $\cal G$ is a `universal' specialisation---that is, a specialisation $(H,\nu_\bullet, t_\bullet)$ of $\cal G$ such that for any other specialisation $(K,\beta_\bullet, t'_\bullet)$ of $\cal G$ there exists a unique morphism $\phi\colon H\to K$ such that $\phi\nu_x=\beta_x$ for all $x\in X$ and $\phi(t_e)=t'_e$ for all $e\in E$. Note that the $\nu_x$ are {\em not} required to be injections.

We denote the pro-$p$ fundamental group by $\Pi_1({\cal G})$ or $\Pi_1(X,G_\bullet)$. We also use the notation $G_x\amalg_{G_e} G_y$ for the fundamental group of a graph of groups with two vertices $x$ and $y$ and one edge $e$.
\end{defn}
An equivalent, but slightly less formal, definition would be to define $\Pi_1(\cal G)$ to be the pro-$p$ group given by the pro-$p$ presentation
\begin{multline*}
\big<G_x\, (x\in X),\, t_e\, (e\in EX) \, \big| \, t_e=1 \text{ (for all }e\in ET)\\ \bdy_{e,0}(g) = g = t_e\bdy_{e,1}(g)t_e^{-1} \text{ (for all } e\in EX, g\in G_e)\big> 
\end{multline*}
The fundamental pro-$p$ group of a graph of groups always exists \cite[Proposition 6.2.1(b)]{Ribes17} and is independent of the choice of $T$ \cite[Theorem 6.2.4]{Ribes17}.
\begin{rmk}
One can also consider a graph of pro-$p$ groups as a graph of profinite groups, and consider the above definition of `fundamental group' in the category of all profinite groups. The profinite fundamental group is not in general isomorphic to the pro-$p$ fundamental group, and the two concepts should be carefully distinguished if both are present. However in this paper we only consider pro-$p$  fundamental groups, so we will not clutter the notation by distinguishing it from the profinite fundamental group.
\end{rmk}

Let ${\cal G}=(X,G_\bullet)$ be a finite graph of {\em finite} $p$-groups. We may consider $\cal G$ as a graph of pro-$p$ groups as above, or we may consider it as a graph of finite discrete groups and consider the fundamental group of $\cal G$ in the category of discrete groups \cite[Section 5.1]{SerreTrees}. We denote the discrete fundamental group by $\pi_1(\cal G)$ or $\pi_1(X,G_\bullet)$. The pro-$p$ presentation above makes clear the following relation between the discrete and pro-$p$ fundamental groups.
\begin{prop}[Proposition 6.5.1 of \cite{Ribes17}]\label{ProPvsDisc}
Let ${\cal G}= (X,G_\bullet)$ be a finite graph of finite $p$-groups. Then $\Pi_1(\cal G)$ is naturally isomorphic to the pro-$p$ completion of $\pi_1(\cal G)$.
\end{prop}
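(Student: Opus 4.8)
The plan is to show that $\Pi_1({\cal G})$ and $\proP{\pi_1({\cal G})}$ represent the same functor on the category of pro-$p$ groups, and then to invoke the Yoneda lemma (together with naturality of the representing bijections) to conclude that the two groups are canonically isomorphic. The proof is essentially formal: it is a matter of matching up three universal properties, the only genuine input being the finiteness of the vertex and edge groups.

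First I would fix a maximal subtree $T$ of $X$ and record the relevant universal properties. By its defining property, $\Pi_1({\cal G})$ is the universal pro-$p$ specialisation, so for every pro-$p$ group $K$ there is a natural bijection between $\Hom(\Pi_1({\cal G}), K)$ (continuous homomorphisms) and the set of pro-$p$ specialisations $(K, \beta_\bullet, t'_\bullet)$ of $\cal G$. Dually, the discrete fundamental group $\pi_1({\cal G})$ satisfies the analogous universal property among discrete specialisations, so that $\Hom_{\mathrm{disc}}(\pi_1({\cal G}), K)$ is in natural bijection with the set of discrete specialisations of $\cal G$ into the underlying abstract group of $K$.

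The crucial step—and the only place where the hypothesis that the $G_x$ and $G_e$ are finite $p$-groups enters essentially—is to identify pro-$p$ specialisations into $K$ with discrete specialisations into $K$. In either category a specialisation is precisely the data of homomorphisms $\beta_x\colon G_x\to K$ and elements $t'_e\in K$ satisfying the two compatibility conditions of Definition \ref{FGDefn}, which are verbatim the same in both settings. The sole potential discrepancy is that a pro-$p$ specialisation demands the $\beta_x$ be continuous; but since each $G_x$ is finite, every abstract homomorphism $G_x\to K$ is automatically continuous, so the two sets of specialisations coincide, naturally in $K$. Stringing the identifications together gives, for every pro-$p$ group $K$, natural bijections
\[ \Hom(\Pi_1({\cal G}), K) \;\cong\; \{\text{discrete specialisations into } K\} \;\cong\; \Hom_{\mathrm{disc}}(\pi_1({\cal G}), K) \;\cong\; \Hom(\proP{\pi_1({\cal G})}, K), \]
where the first bijection combines the universal property of $\Pi_1$ with the finiteness identification, and the last is the defining universal property of the pro-$p$ completion (continuous homomorphisms from $\proP{\pi_1({\cal G})}$ to a pro-$p$ group $K$ correspond to abstract homomorphisms out of $\pi_1({\cal G})$).

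Thus the functors $\Hom(\Pi_1({\cal G}),-)$ and $\Hom(\proP{\pi_1({\cal G})},-)$ agree on pro-$p$ groups, and Yoneda supplies a canonical isomorphism $\Pi_1({\cal G})\cong\proP{\pi_1({\cal G})}$. The main thing still to verify is that this isomorphism respects the structural maps $\nu_x$ on the two sides—that is, that it is \emph{natural} in the sense asserted. I expect this bookkeeping to be the only mildly delicate part: it is handled by tracking the universal discrete specialisation of $\pi_1({\cal G})$ through the completion map $\pi_1({\cal G})\to\proP{\pi_1({\cal G})}$ and checking that the resulting pro-$p$ specialisation is the universal one classifying $\Pi_1({\cal G})$, so that the Yoneda isomorphism carries $\nu_x^{\Pi}$ to the image of $\nu_x^{\pi}$.
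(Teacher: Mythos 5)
Your argument is correct. The paper gives no proof of this proposition---it is quoted from Ribes' book, with the preceding remark that ``the pro-$p$ presentation above makes clear'' the relation---and your universal-property/Yoneda argument is simply a careful unwinding of that same observation: the essential point in both cases is that finiteness of the vertex and edge groups makes every abstract homomorphism out of them automatically continuous, so the pro-$p$ presentation and the discrete presentation classify exactly the same specialisations into any pro-$p$ group $K$.
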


In Definition \ref{FGDefn} we noted that the natural maps $G_x\to \Pi_1(X,G_\bullet)$ are not required to be injections. This is in sharp contrast to the situation for graphs of discrete groups, where injectivity is automatic. However graphs of groups are most useful when this injectivity does hold, so we give this property a name.
\begin{defn}
A graph of pro-$p$ groups ${\cal G}=(X,G_\bullet)$ is {\em proper} if the natural maps $G_x\to \Pi_1({\cal G})$ are injections for all $x\in X$.
\end{defn}
A proper, reduced graph of pro-$p$ groups whose fundamental pro-$p$ group is isomorphic to a pro-$p$ group $G$ will be referred to as a {\em splitting} of $G$.
\begin{rmk}
There is a bifurcation of terminology in the literature. One finds discussions of `injective' graphs of groups \cite{Ribes17} and of `proper' amalgamated free products and HNN extensions \cite{RZ00} (which are of course the  basic examples of graphs of groups). There are also papers with the convention that only proper amalgamated free products `exist' \cite{Ribes71}. The preference of the present author is to use the word `proper' uniformly.  
\end{rmk}
For a graph of finite $p$-groups ${\cal G}=(X,G_\bullet)$, properness is equivalent both to the existence of a finite quotient of $\Pi_1({\cal G})$ into which all $G_x$ inject and to the property that the {\em discrete} fundamental group $\pi_1({\cal G})$ (that is, the fundamental group when considered as a graph of discrete groups) is residually $p$. There are classical criteria for this property in the case of one-edge graphs of groups \cite{Hig64, chat94}. Criteria for more general graphs of groups also exist \cite{AF13, Wilkes18e}.
\begin{defn}
Let $G$ be a pro-$p$ group. We say $G$ is {\em accessible} if there is a number $n=n(G)$ such that any finite, proper, reduced graph of pro-$p$ groups $\cal G$ with finite edge groups having fundamental group isomorphic to $G$ has at most $|EX|\leq n$ edges.
\end{defn}
Generally speaking, proper graphs of groups are the only ones worth considering. An illustration of this principle in the context of accessibility will be given later (Example \ref{ArbLargeImprop}).

We define an `inverse system of graphs of groups' in the following way. Fix a graph $X$ and an inverse system $(I,\prec)$. An inverse system of graphs of groups $({\cal G}_i)_{i\in I}$ is a family of graphs of groups ${\cal G}_i = (X, G_{i,\bullet})$ together with surjective transition maps $G_{i,x}\to G_{j,x}$ for all $x\in X$ and $i\succ j$, such that the diagrams 
\[ \begin{tikzcd}
G_{i,e}\ar[hook]{r} \ar{d} &  G_{i,d_k(x)}\ar{d}\\
G_{j,e}\ar[hook]{r} &  G_{j,d_k(x)}
\end{tikzcd}\]
commute for all $e\in EX$, $k=0,1$ and all $i\succ j$. There is then a natural {\em inverse limit graph of groups} ${\cal H}=(X, H_\bullet)$ where $H_x=\varprojlim_i G_{i,x}$ for each $x\in X$.
\begin{prop}\label{invlimgofgs}
Take an inverse system of graphs of groups ${\cal G}_i = (X, G_{i,\bullet})$ with inverse limit graph of groups ${\cal H}=(X, H_\bullet)$.
\begin{enumerate}
\item Taking fundamental groups commutes with inverse limits---that is, we have $\varprojlim \Pi_1({\cal G}_i) = \Pi_1({\cal H})$.
\item If each ${\cal G}_i$ is proper then ${\cal H}$ is proper.
\end{enumerate}
\end{prop}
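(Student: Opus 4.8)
The plan is to prove both parts through the universal property of the pro-$p$ fundamental group, reducing every statement about a target pro-$p$ group $K$ to its finite quotients. First I would record that the $\Pi_1({\cal G}_i)$ themselves form an inverse system: the transition maps $G_{i,x}\to G_{j,x}$, together with $t_e\mapsto t_e$, convert any specialisation of ${\cal G}_j$ into one of ${\cal G}_i$, so by universality they induce compatible morphisms $\Pi_1({\cal G}_i)\to\Pi_1({\cal G}_j)$. The projections $H_x=\varprojlim_i G_{i,x}\to G_{i,x}$ give a morphism of graphs of groups ${\cal H}\to{\cal G}_i$ for each $i$, hence a morphism $\rho_i\colon\Pi_1({\cal H})\to\Pi_1({\cal G}_i)$; uniqueness in the universal property makes the $\rho_i$ compatible over $I$, so they assemble into a single morphism $\rho\colon\Pi_1({\cal H})\to\varprojlim_i\Pi_1({\cal G}_i)=:P$.

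For part (1) I would then show $\rho$ is an isomorphism by verifying that $P$ satisfies the universal property of $\Pi_1({\cal H})$. Given a specialisation $(K,\beta_\bullet,t'_\bullet)$ of ${\cal H}$, write $K=\varprojlim_\lambda K/N_\lambda$ over its open normal subgroups. For each $\lambda$ every composite $H_x\to K\to K/N_\lambda$ is a continuous map from $\varprojlim_i G_{i,x}$ into a finite group, so it factors through some $G_{i,x}$; since $X$ has finitely many vertices and edges and $(I,\prec)$ is directed, a single index $i=i(\lambda)$ works for all $x$ and $e$ simultaneously. The factored data then forms a specialisation of ${\cal G}_{i(\lambda)}$ into $K/N_\lambda$, so universality of $\Pi_1({\cal G}_{i(\lambda)})$ yields a morphism $\Pi_1({\cal G}_{i(\lambda)})\to K/N_\lambda$, which I precompose with the projection $P\to\Pi_1({\cal G}_{i(\lambda)})$. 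These morphisms are compatible as $\lambda$ varies (again by uniqueness), assembling into a morphism $\phi\colon P\to K$ that induces $\beta_\bullet$ and $t'_\bullet$; uniqueness of $\phi$ follows because the images of the $H_x$ together with the elements $t_e$ topologically generate $P$.

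The main obstacle is the uniform factorisation at a single finite level $G_{i(\lambda),\bullet}$ and the verification that the factored maps genuinely satisfy the edge relations of ${\cal G}_{i(\lambda)}$. This is where the commuting squares of the inverse system are essential: to check the relation $\beta_{d_0(e)}(\bdy_{e,0}(h))=t'_e\,\beta_{d_1(e)}(\bdy_{e,1}(h))\,(t'_e)^{-1}$ at level $i(\lambda)$ one lifts $h\in G_{i(\lambda),e}$ to an element of $H_e$ and pushes the known relation in ${\cal H}$ forward. That lifting uses surjectivity of the projection $H_e\to G_{i(\lambda),e}$, which holds because the transition maps are surjective and the groups are compact, and the commuting squares guarantee that $\bdy^{i(\lambda)}_{e,k}$ matches the image of $\bdy^{{\cal H}}_{e,k}$.

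Part (2) is then immediate. Under the identifications $H_x=\varprojlim_i G_{i,x}$ and $\Pi_1({\cal H})=\varprojlim_i\Pi_1({\cal G}_i)$ supplied by part (1), the natural map $H_x\to\Pi_1({\cal H})$ is precisely the inverse limit of the natural maps $G_{i,x}\to\Pi_1({\cal G}_i)$. If each ${\cal G}_i$ is proper these are all injective, so if some $h\in H_x$ maps to $1$ in $\Pi_1({\cal H})$, then its image in every $\Pi_1({\cal G}_i)$ is trivial, forcing its image in every $G_{i,x}$ to be trivial, and hence $h=1$ in $\varprojlim_i G_{i,x}=H_x$. Therefore every natural map $H_x\to\Pi_1({\cal H})$ is injective and ${\cal H}$ is proper.
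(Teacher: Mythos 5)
Your proof is correct and rests on the same key step as the paper's: continuous maps out of $H_x=\varprojlim_i G_{i,x}$ into a finite $p$-group factor through a single finite level $G_{j,x}$, uniformly over the finite graph $X$, and part (2) follows because an inverse limit of injections is an injection. The only (cosmetic) difference is that you verify the universal property of $\varprojlim_i\Pi_1({\cal G}_i)$ directly, whereas the paper constructs the canonical surjection $\Pi_1({\cal H})\to\varprojlim_i\Pi_1({\cal G}_i)$ and proves it injective by separating a nontrivial element with a map to a finite $p$-group.
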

\begin{proof}
By the universal property defining $\Pi_1(\cal H)$, for each $i$ there is a natural surjection $\Psi_i\colon \Pi_1({\cal H})\to \Pi_1({\cal G}_i)$. Hence we have a natural surjection $\Psi\colon \Pi_1({\cal H})\to \varprojlim\Pi_1({\cal G}_i)$. 

To verify that $\Psi$ is an isomorphism, take $h\in \Pi_1({\cal H})\smallsetminus \{1\}$. There is some finite $p$-group $K$ and a map $\phi\colon \Pi_1({\cal H})\to K$ such that $\phi(h)\neq 1$. For each $x\in X$, the natural map 
\[ \varprojlim G_{i,x} \to \Pi_1({\cal H})\to K\]
is continuous, and hence factors through some $G_{i_x,x}$. Since $X$ is a finite graph, we may choose $j\in I$ such that $j\succ i_x$ for all $x$. The natural maps $G_{j,x}\to K$ are compatible with the monomorphisms in the definition of the graph of groups ${\cal G}_i$, and hence give rise to a natural map $\Pi_1({\cal G}_i)\to K$ through which $\phi$ factors. It follows that $\Psi_i(h)\neq 1$, so that $\Psi(h)\neq 1$ and we have proved part 1 of the theorem.
\[\begin{tikzcd}
\varprojlim G_{i,x} \ar{r}\ar{d} & \Pi_1({\cal H}) \ar{r}{\Psi_i} \ar{dd}{\phi}& \Pi_1({\cal G}_j)\ar[dotted]{ddl}{\exists}\\
G_{j,x} \ar{dr} & & \\
& K &
\end{tikzcd}\]
The second part of the theorem holds because the map $\varprojlim G_{i,x} \to \Pi_1({\cal H})$ is the inverse limit of the maps $G_{j,x}\to \Pi_1({\cal G}_j)$ and because inverse limits of injective maps are injective.
\end{proof}
When working with graphs of groups one very often conducts proofs in an inductive fashion---for instance by proving a statement for a subgraph of groups with one fewer edge and then adding in the remaining edge. We will therefore include the following warning: properness is a property of an {\em entire} graph of groups, and not a property which may be studied edge-by-edge. We give the following example. The graph is simply a line segment with three vertices and two edges. The amalgamation over either edge is individually proper, but the full graph of groups is improper.
\begin{example}
Take four copies $G_1,\ldots, G_4$ of the mod-$p$ Heisenberg group 
\[\big< x,y  \,\big|\, x^p=y^p=[x,y]^p=1, [x,y]\text{ central } \big> \]
whose given generating sets shall be denoted $\{x_1,y_1\},\ldots, \{x_4, y_4\}$. Furthermore take two copies $H_1$ and $H_2$ of $\F_p^2$, with generating sets $\{u_1, v_1\}$ and $\{u_2, v_2\}$ respectively. Define a graph of groups
\begin{equation}\label{BadThreeEdge}
\begin{tikzcd}
G_1 \ar[dash]{r}{H_1}& G_2 \times G_3 \ar[dash]{r}{H_2} & G_4
\end{tikzcd}
\end{equation}
with inclusions given by
\[\begin{tikzcd}[row sep=tiny]
x_1&\ar[mapsto]{l} u_1 \ar[mapsto]{r} & {[x_2, y_2]} & & x_2 & \ar[mapsto]{l} u_2 \ar[mapsto]{r}& {[x_4, y_4]}\\
{[x_1, y_1]} &\ar[mapsto]{l} v_1 \ar[mapsto]{r} & x_3 && {[x_3,y_3]} &\ar[mapsto]{l} v_2 \ar[mapsto]{r} & x_4
\end{tikzcd}\]
This graph of groups is improper: the identifications $x_1=[x_2,y_2]$ et cetera in its fundamental group imply a relation
\[x_1 = [[[[x_1,y_1],y_3],y_4],y_2] \]
so that $x_1$ may be expressed as a commutator of arbitrary length, and therefore must vanish in a pro-$p$ group. So $G_1$ cannot inject into the fundamental pro-$p$ group of the graph of groups \eqref{BadThreeEdge}. However either amalgamation over just one edge is proper, as may be readily verified using Higman's criterion \cite{Hig64}.
\end{example}
There is thus a warning attached to working with sub-graphs of groups in some cases. However once properness of the whole graph of groups is established one may indeed manipulate subgraphs in the expected manner. The next proposition effectively allows us to `bracket' sections of a proper graph of groups for individual study.

Let $X$ be a connected finite graph and let ${\cal G} = (X, G_\bullet)$ be a proper graph of pro-$p$ groups with fundamental group $G=\Pi_1({\cal G})$. Let $Y$ be a connected subgraph of $X$. Let $Z=X/Y$ be the quotient graph, where we denote the image of a point $x\in X\smallsetminus Y$ by $[x]$ and denote the image in $Z$ of $Y$ by $[Y]$. Let ${\cal G}|_Y = (Y,G_\bullet)$ be the subgraph of groups over $Y$ and denote its fundamental pro-$p$ group by $G_Y$. Note that ${\cal G}|_Y$ is proper, as the universal property defining $G_Y$ immediately gives a commuting diagram
\[ \begin{tikzcd}
& G_x \ar{dl} \ar[hook]{dr} &\\
G_Y \ar{rr} && G
\end{tikzcd}\]
for each $x\in Y$. We may define a graph of groups ${\cal H}=(Z, H_\bullet)$ by setting $H_{[x]}=G_x$ for each $x\in X\smallsetminus Y$ and setting $H_{[Y]} = G_Y$. Note that properness of ${\cal G}|_Y$ implies that the natural maps from an edge group $G_e$ of ${\cal G}$ into $G_Y$, for $e$ having an endpoint in $Y$, are indeed injections.
\begin{prop}\label{CollapsingSubgs}
With notation and conditions as above, $\cal H$ is proper and has fundamental group naturally isomorphic to $G$.
\end{prop}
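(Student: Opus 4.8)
The plan is to prove the two assertions separately: first that $\Pi_1({\cal H})\iso G$ naturally, by showing that ${\cal H}$ and ${\cal G}$ corepresent the same functor of specialisations, and then that this identification carries the structure maps of ${\cal H}$ to injective maps, which is properness. To fix compatible stable letters I would choose a maximal subtree $T_Y$ of $Y$, extend it to a maximal subtree $T$ of $X$, and use its image $T_Z=T/T_Y$ as the maximal subtree for ${\cal H}$; this is a spanning tree of $Z$, and under the collapse $EZ$ is identified with $EX\smallsetminus EY$ and $ET_Z$ with $ET\smallsetminus ET_Y$.

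For the isomorphism I would argue at the level of the functors corepresented by the two fundamental groups. Fix a pro-$p$ group $K$. A specialisation of ${\cal H}$ into $K$ consists of maps $\beta_{[x]}\colon G_x\to K$ for $x\in X\smallsetminus Y$, a map $\beta_{[Y]}\colon G_Y\to K$, and stable letters $t_e$ for $e\in EZ$. By the universal property of $G_Y=\Pi_1({\cal G}|_Y)$, the datum $\beta_{[Y]}$ is exactly a specialisation of ${\cal G}|_Y$ into $K$: maps $\gamma_x\colon G_x\to K$ for $x\in Y$ and stable letters $t_e$ for $e\in EY$ (with $t_e=1$ on $ET_Y$), related by $\beta_{[Y]}\circ\iota_x=\gamma_x$, where $\iota_x\colon G_x\to G_Y$ is the structure inclusion. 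Assembling the outside maps $\beta_{[x]}$, the maps $\gamma_x$, and all the stable letters yields precisely the data of a specialisation of ${\cal G}$ into $K$, and this assignment is natural in $K$ and reversible. The only relations needing a check are those on the edges $e\in EX\smallsetminus EY$ meeting $Y$: for such $e$ with $d_0(e)\in Y$ the boundary map of ${\cal H}$ is $\bdy^{\cal H}_{e,0}=\iota_{d_0(e)}\circ\bdy_{e,0}$, so the ${\cal H}$-relation $\beta_{[Y]}(\bdy^{\cal H}_{e,0}(g))=t_e\beta_{[d_1(e)]}(\bdy_{e,1}(g))t_e^{-1}$ becomes $\gamma_{d_0(e)}(\bdy_{e,0}(g))=t_e\gamma_{d_1(e)}(\bdy_{e,1}(g))t_e^{-1}$, which is the ${\cal G}$-relation; the edges inside $Y$ and those disjoint from $Y$ match tautologically, as does the condition $t_e=1$ on $ET$. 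Thus $\Hom(\Pi_1({\cal H}),-)$ and $\Hom(G,-)$ are naturally isomorphic, and Yoneda supplies a natural isomorphism $\Pi_1({\cal H})\iso G$ compatible with all structure maps.

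It remains to prove that ${\cal H}$ is proper, i.e.\ that each structure map $H_{[x]}\to\Pi_1({\cal H})$ is injective. Transporting along the isomorphism just constructed, the map at a vertex $[x]$ with $x\in X\smallsetminus Y$ is identified with the original structure map $G_x\to G$, which is injective since ${\cal G}$ is proper; and the map at $[Y]$ is identified with the natural map $\nu_{[Y]}\colon G_Y=\Pi_1({\cal G}|_Y)\to G$. Everything therefore reduces to the injectivity of $\nu_{[Y]}$, and this is the main obstacle, since in the pro-$p$ setting such natural maps are \emph{not} injective in general (a naive retraction $G\to G_Y$ fails, as the vertices outside $Y$ cannot be collapsed compatibly with the edge relations). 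I would establish it using the pro-$p$ Bass--Serre theory of \cite{Ribes17}: since ${\cal G}$ is proper, $G$ acts on its standard pro-$p$ tree $S$ with quotient $X$ and with vertex and edge stabilisers the conjugates of the $G_x$ and $G_e$. Lifting $Y$ along $T_Y$ gives a connected pro-$p$ subtree of $S$ on which the subgroup $\nu_{[Y]}(G_Y)=\gp{\nu_x(G_x),t_e: x\in Y,\,e\in EY}$ acts with quotient $Y$; this is exactly the standard action attached to the proper graph of groups ${\cal G}|_Y$, so the subtree is the standard tree of ${\cal G}|_Y$ and $\nu_{[Y]}(G_Y)$ is canonically identified with $\Pi_1({\cal G}|_Y)=G_Y$. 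Hence $\nu_{[Y]}$ is injective, completing the proof that ${\cal H}$ is proper.
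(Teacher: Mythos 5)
Your reduction of the statement to the single claim that $\nu_{[Y]}\colon G_Y\to G$ is injective is correct, and your universal-property argument for $\Pi_1(\mathcal{H})\iso G$ agrees with the paper's (one-line) treatment of that half. For the injectivity of $\nu_{[Y]}$, however, you take a route through pro-$p$ Bass--Serre duality that differs from the paper's, and as written it is circular at the decisive step. You assert that the action of $L:=\nu_{[Y]}(G_Y)$ on the orbit of a lift of $Y$ in the standard pro-$p$ tree $S$ of $\mathcal{G}$ ``is exactly the standard action attached to $\mathcal{G}|_Y$, so the subtree is the standard tree of $\mathcal{G}|_Y$.'' But the standard tree of $\mathcal{G}|_Y$ has vertex set $\bigsqcup_{y\in VY} G_Y/G_y$, whereas your subtree has vertex set $\bigsqcup_{y\in VY} L/G_y$; these coincide precisely when $G_Y\to L$ is an isomorphism, which is the statement being proved. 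The non-circular version of your idea is to apply the structure theorem for pro-$p$ groups acting on pro-$p$ trees with finite quotient graph (see \cite[Section 6.6]{Ribes17}) to the action of $L$ on $S_Y=L\cdot\widetilde{Y}$: one must verify that $S_Y$ is a closed connected subgraph of $S$ (hence a pro-$p$ tree), that $L\backslash S_Y=Y$, and that the vertex and edge stabilisers are conjugates of the $G_y$ and $G_e$; the theorem then yields that the specialisation-induced map $\Pi_1(Y,G_\bullet)\to L$ --- which is exactly $\nu_{[Y]}$ corestricted to its image --- is an isomorphism. With those checks supplied your argument goes through, but they are precisely the content that has been elided by identifying the subtree with the standard tree of $\mathcal{G}|_Y$ by fiat.

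For comparison, the paper avoids the tree machinery entirely. It first treats the case where all $G_x$ are finite $p$-groups: there $\pi_1(Y,G_\bullet)\hookrightarrow\pi_1(\mathcal{G})$ by discrete Bass--Serre theory, and by \cite[Lemma 3.2.6]{RZ00} the induced map of pro-$p$ completions is still injective provided $\pi_1(Y,G_\bullet)$ has a suitable separability property in $\pi_1(\mathcal{G})$. This is obtained from properness: there is a finite $p$-quotient injective on all $G_x$, its kernel $F$ is free, $F\cap\pi_1(Y,G_\bullet)$ is a free factor and hence a retract of $F$, and preimages of normal subgroups under the retraction supply the required subgroups. The general case is then deduced by writing $\mathcal{G}$ as the inverse limit of the finite quotient graphs of groups $\mathcal{G}/U$ over open normal subgroups $U$ of $G$ and invoking Proposition \ref{invlimgofgs}. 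If you prefer your route, you should at minimum cite the structure theorem explicitly and carry out the connectivity, quotient and stabiliser verifications for $S_Y$.
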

\begin{proof}
The isomorphism $\Pi_1{(\cal H)}\iso \Pi_1(\cal G)$ holds because the two groups may be readily seen to satisfy the same universal property. It remains to show that ${\cal H}$ is proper---i.e. that $G_Y=\Pi_1(Y,G_\bullet)$ injects into $G=\Pi_1(\cal G)$.

We first show that the proposition is valid for graphs ${\cal G}=(X,G_\bullet)$ of finite $p$-groups. In this case we may also consider $\cal G$ as a graph of discrete groups. We have a commuting diagram 
\[\begin{tikzcd}
\pi_1(Y,G_\bullet) \ar{r} \ar{d} &\pi_1({\cal G}) \ar{d}\\
\Pi_1(Y,G_\bullet) \ar{r} &\Pi_1({\cal G}) 
\end{tikzcd}\]
where, by Proposition \ref{ProPvsDisc}, the lower half of the square is the pro-$p$ completion of the upper half.

The top horizontal map is an injection by the theory of graphs of discrete groups---for example, this follows from the expression of elements as reduced words \cite[Section I.5.2]{SerreTrees}. This alone is not enough to show that $\Pi_1(Y,G_\bullet)$ injects into $\Pi_1({\cal G})$: we need further separability properties of $\pi_1(Y,G_\bullet)$ in $\pi_1({\cal G})$. Specifically \cite[Lemma 3.2.6]{RZ00} we must prove that for each normal subgroup $N$ of $\pi_1(Y,G_\bullet)$ of index a power of $p$, there is a normal subgroup $M$ of $\pi_1({\cal G})$ whose intersection with $\pi_1(Y,G_\bullet)$ is contained in $N$. It is enough to prove this property instead for $F\cap \pi_1(Y,G_\bullet)$ and $F$ for $F$ a normal subgroup of $\pi_1(\cal G)$ with index a power of $p$. 

Since $\cal G$ is proper, there is a map $\phi\colon \pi_1({\cal G})\to K$, for $K$ a finite $p$-group, whose restrictions to all $G_x$ are injections. Then $F=\ker(\phi)$ is a free group, of which $F\cap \pi_1(Y,G_\bullet)$ is a free factor and therefore there is a retraction $\rho\colon F\to F\cap \pi_1(Y,G_\bullet)$ (see \cite[Section I.5.5, Theorem 14]{SerreTrees} and the remarks following it). Then for $N$ a normal subgroup of $F\cap \pi_1(Y,G_\bullet)$ with index a power of $p$, the subgroup $M=\rho^{-1}(N)$ has intersection exactly $N$ with $\pi_1(Y,G_\bullet)$ and we are done.

We now deduce the general case. Let $U\nsgp[\rm o] G$ be any open normal subgroup of $G$. We can then form a graph of groups ${\cal G}/U = (X, G_\bullet / G_\bullet\cap U)$ with finite edge groups, which is proper since there is a map to the finite $p$-group $G/U$ restricting to an injection on each $G_x/G_x\cap U$. The universal properties of the various graphs of groups then give a natural commuting diagram
\[\begin{tikzcd}
\Pi_1(Y,G_\bullet)\ar{r}\ar{d}& \Pi_1(X, G_\bullet)\ar{d} \\
\Pi_1(Y,G_\bullet/G_\bullet \cap U)\ar[hook]{r}& \Pi_1(X, G_\bullet/G_\bullet\cap U)
\end{tikzcd}\]
where the injectivity of the lower arrow follows from the first part of the proof. By naturality and Proposition \ref{invlimgofgs}, the top arrow is the inverse limit of the bottom arrows over all $U\nsgp[\rm o] G$. An inverse limits of injections is an injection, thus proving the theorem.
\end{proof}

\section{Uniformly bounded edge groups}\label{secUBEG}
Linnell \cite{Linnell83} proved that finitely generated discrete groups satisfy a form of accessibility provided one imposes a bound on the size of all edge groups rather than simply requiring them to be finite. In this section we prove a similar result for pro-$p$ groups. The proof uses residual finiteness to deduce the theorem from the case of trivial edge groups, and proceeds using the duality between finite graphs of pro-$p$ groups and actions on pro-$p$ trees. This duality is entirely analogous to the classical duality of Bass-Serre theory for discrete groups \cite{SerreTrees}. See \cite[Sections 6.4 and 6.6]{Ribes17} for an account of the relavant theory.
\begin{theorem}\label{thmUBEG}
Let $G$ be a finitely generated pro-$p$ group and let $K$ be an integer. Let $(X, G_\bullet)$ be a proper reduced finite graph of groups decomposition of $G$ such that each edge group $G_e$ has size at most $K$. Then $X$ has at most
\[\frac{pK}{p-1}(\rk(G)-1) + 1 \]
edges.
\end{theorem}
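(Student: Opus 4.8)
The plan is to exploit the duality between proper finite graphs of pro-$p$ groups and actions on pro-$p$ trees (see \cite[Sections 6.4 and 6.6]{Ribes17}): I realise $G=\Pi_1(X,G_\bullet)$ as acting on a pro-$p$ tree $T$ with $G\backslash T = X$, so that the vertex and edge stabilisers are precisely conjugates of the groups $G_v$ and $G_e$. Since each $G_e$ is finite and there are only finitely many of them, residual $p$-finiteness of $G$ lets me pick an open normal subgroup $U\trianglelefteq G$, of index $d$, with $U\cap G_e=1$ for every edge group and moreover $U\cap G_v=1$ for every \emph{finite} vertex group. Then $U$ acts on $T$ with trivial edge stabilisers, and $Y=U\backslash T$ presents $U$ as the fundamental group of a proper graph of pro-$p$ groups $(Y,U_\bullet)$ all of whose edge groups are trivial; this is the reduction to the trivial-edge-group case promised in the introduction.

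For that base case $U$ is a free pro-$p$ product of its vertex groups with a free pro-$p$ group of rank $\beta_1(Y)=|EY|-|VY|+1$, so additivity of $H^1(-,\F_p)$ over free pro-$p$ products gives $\rk(U)=\beta_1(Y)+\sum_{w\in VY}\rk(U_w)$. A routine orbit count (using $U\trianglelefteq G$ and $U\cap G_e=1$) yields $|EY|=d\sum_e 1/|G_e|$, while over a vertex $v$ there are $d/[G_v:G_v\cap U]$ vertices of $Y$, each with group $\cong G_v\cap U$; substituting and rearranging turns the rank formula into $\rk(U)=d\sum_e 1/|G_e| + 1 + \sum_v (d/[G_v:G_v\cap U])(\rk(G_v\cap U)-1)$. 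Now I feed in the Schreier-type inequality for pro-$p$ groups, $\rk(U)-1\le d(\rk(G)-1)$. The terms for infinite $G_v$ are nonnegative and may be discarded, while a finite $G_v$ (which has $G_v\cap U=1$) contributes exactly $-1/|G_v|$; dividing by $d$ I obtain the clean Euler-characteristic bound
\[ \rk(G)-1 \ \ge\ \sum_{e\in EX}\frac{1}{|G_e|}\ -\ \sum_{v\in VX,\ |G_v|<\infty}\frac{1}{|G_v|}. \]

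It remains to convert this into a bound on $|EX|$, and this is where reducedness enters and the constant $\tfrac{pK}{p-1}$ is produced. Since the asserted bound increases with $K$ I may assume $K=\max_e|G_e|$ is attained. Choosing a spanning tree of $X$ and rooting it at a vertex $v_0$ of maximal vertex-group order, each non-root vertex $v$ has a unique non-loop parent edge $e_v$; reducedness forces $\bdy_{e_v}\colon G_{e_v}\hookrightarrow G_v$ to be proper, so $1/|G_v|\le\tfrac1p\,1/|G_{e_v}|$. Summing over the bijection $v\mapsto e_v$ bounds $\sum_v 1/|G_v|$ by $1/|G_{v_0}|+\tfrac1p\sum_e 1/|G_e|$; combining with the displayed inequality, using $1/|G_e|\ge 1/K$ on tree and non-tree edges separately, and using $|G_{v_0}|\ge pK$ when the maximal edge is a non-loop (and the extra loop in $\beta_1$ otherwise), reduces everything to the elementary graph inequality $|EX|-|VX|+p\ge pK/|G_{v_0}|$, which holds for a connected graph. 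Unwinding gives $|EX|-1\le\tfrac{pK}{p-1}(\rk(G)-1)$, as required.

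The main obstacle I anticipate is not any single deep input but the bookkeeping in the middle step: one must verify that passing to $U$ really does produce \emph{trivial} edge groups and a \emph{proper} quotient graph of groups (so that the free-product rank formula applies), and one must correctly account for the vertex groups of $Y$ — in particular that finite vertex groups of $X$ are exactly those killed, each costing a controlled $1/|G_v|$, while infinite vertex groups only help. Pinning down the exact constant $\tfrac{pK}{p-1}$ and the additive $+1$ then hinges on the final charging argument, and I expect the delicate points there to be the choice of root and the separate treatment of loop edges, which never lie in a spanning tree and so only inflate $\beta_1(X)$ in our favour.
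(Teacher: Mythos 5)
Your proposal is correct and follows essentially the same route as the paper: pass to a finite-index normal subgroup meeting all edge groups trivially, use the dual pro-$p$ tree to get a trivial-edge-group decomposition of that subgroup, combine the Euler-characteristic/orbit count with the Schreier rank bound, and then use reducedness along a rooted spanning tree to control the negative vertex terms. The only differences are bookkeeping choices (you drop infinite vertex groups directly and charge the root term via a global graph inequality, where the paper works with images $\phi(G_v)$ in the finite quotient and absorbs the root term into an adjacent edge), and both versions check out.
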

\begin{proof}
Since the graph of groups is proper, we may treat the $G_x$ (for $x\in X$) as (closed) subgroups of $G$. For each $e\in EX$ and each endpoint $x$ of $e$ such that $G_e\neq G_x$, there is a finite quotient of $G$ such that the image of $G_x$ is not equal to the image of $G_e$. Furthermore the $G_e$ are finite. Therefore there exists a finite quotient $\phi\colon G\to P$, with kernel $H$, with the properties that $\phi|_{G_e}$ is an injection for all $e\in EX$ and such that $\phi(G_e)\neq \phi(G_x)$ if $x$ is an endpoint of $e$ for which $G_e\neq G_x$.

The action of $H$ on the $G$-tree $T$ dual to the graph of groups $(X,G_\bullet)$ gives a graph of groups decomposition $(Y,H_\bullet)$ of $H$ where $Y= T/H$. The groups $H_\bullet$ are given by $H$-stabilizers in $T$, so that for $y\in Y$ which maps to $x\in X$ under the quotient map $T/H\to T/G$, the group $H_y$ is a $G$-conjugate of $H\cap G_x$. In particular, since the restriction of $\phi$ to $G_e$ is injective, the graph of groups $(Y,H_\bullet)$ has trivial edge groups. Furthermore $P$ acts on $Y$ with quotient $X$, and with stabilisers which are conjugates of $G_x/H\cap G_x=\phi(G_x)$. Thus $x\in X$ has $|P/\phi(G_x)|$ preimages in $Y$, each with group $H_y$ isomorphic to $H\cap G_x$. 

Factoring out the $H_y$ gives a surjection from $H$ to a free group of rank $1-\chi(Y)$, giving an inequality
\[1-\chi(Y)\leq \rk(H). \]
On the other hand, since $H$ is an index $|P|$ subgroup of $G$, its rank must be bounded above by
\[ \rk(H)\leq (\rk(G)-1)|P| + 1\]
Calculating $\chi(Y)$ using the vertex and edge counts of $Y$ from above and cancelling a common factor of $|P|$ gives the inequality
\[\rk(G) -1 \geq \sum_{e\in EX} \frac{1}{|G_e|} - \sum_{v\in VX} \frac{1}{|\phi(G_v)|}\]
To tame the negative term, choose a maximal rooted subtree $Z$ of $X$ and direct it away from the root. Then each vertex $x$ of $X$ other than the root $r$ is the terminal point $d_1(e)$ of exactly one incoming edge $e$ in $Z$. Since $(X,G_\bullet)$ is reduced, the only edges for which $G_e$ may equal an endpoint are loops, and therefore are not in $Z$. Hence
\[p|G_{e}|\leq |\phi(G_{t(e)})|\]
for each $e\in Z$. Combining these inequalities, we find
\[\rk(G)-1\geq \sum_{e\notin Z} \frac{1}{G_e} + \sum_{e\in Z} \left(1-\frac{1}{p}\right)\frac{1}{|G_e|} - \frac{1}{\phi(G_r)}\]
Finally take some edge $e_0$ adjacent to $r$. If $e_0\in Z$ then once again
\[p|G_{e_0}|\leq |\phi(G_r)|\]
Otherwise, $|G_{e_0}| \leq |\phi(G_r)|$. Either way, one of the $|EX|$ positive terms above is at least as great as the final negative term. We gather the other $|EX|-1$ terms together, recalling that $|G_e|\leq K$ for all $e$, to obtain the final inequality
\[\rk(G)-1 \geq \left(1-\frac{1}{p}\right)\frac{|EX|-1}{K}\]
as required. 
\end{proof}
\begin{rmk}
From the above calculations one may also find an inequality
\[\frac{p}{p-1}\rk(G) \geq \sum_{e\in EX} \frac{1}{|G_e|}. \]
This should be compared with the inequality 
\[ 2 {\rm d}_{\Q G}(\Q\mathfrak{g})-1 \geq \sum_{e\in EX} \frac{1}{|G_e|}\]
found by Linnell \cite[Theorem 2]{Linnell83}, where $\Q\mathfrak{g} = \ker(\Q G\to \Q)$ is the augmentation ideal.
\end{rmk}

\section{An inaccessible finitely generated pro-$p$ group}\label{secInacc}
\subsection{Introduction}
In this section we will adapt Dunwoody's construction of an inaccessible discrete group \cite{Dunwoody93} to the pro-$p$ category to prove the existence of an inaccessible finitely generated pro-$p$ group. First let us briefly recall the construction from \cite{Dunwoody93}. Take a diagram of groups as follows, where arrows denote proper inclusions.
\[ \begin{tikzcd}
G_1 && G_2 && G_3 && \cdots\\
& K_1 \ar{ul}\ar{ur} && K_2 \ar{ul}\ar{ur} && K_3 \ar{ul}\ar{ur} &  \\
& H_1 \ar{u}\ar{rr} && H_2 \ar{u}\ar{rr} && H_3 \ar{u}\ar{r} & \cdots
\end{tikzcd}\]
Assume the following hypotheses:
\begin{itemize}
\item $G_1$ is finitely generated;
\item for all $i$, the group $G_{i+1}$ is generated by $K_i$ and $H_{i+1}$; and
\item each $K_i$ is finite.
\end{itemize}
Then the group $P$ defined as the fundamental group of the infinite graph of groups
\begin{equation}\label{infgofgs}
\begin{tikzcd}
 G_1 \ar[dash]{r}{K_1} & G_2 \ar[dash]{r}{K_2} & G_3 \ar[dash]{r}{K_3}& \cdots 
\end{tikzcd}\end{equation}
has arbitrarily large splittings over finite groups. It may however not be finitely generated. This problem is overcome by noting that the countable group $H_\infty = \bigcup H_n$ embeds in some finitely generated group $H_\omega$. Then by construction $J=P\ast_{H_\infty} H_\omega$ is generated by $G_1$ and $H_\omega$, and admits splittings
\[\begin{tikzcd}
 G_1 \ar[dash]{r}{K_1} & \cdots \ar[dash]{r}{K_{n-1}}& G_n \ar[dash]{r}{K_n} &\big( G_{n+1} \ar[dash]{r}{K_{n+2}}& G_{n+2} \ar[dash]{r}{K_{n+3}} & \cdots \big) \ar[dash]{r}{H_\infty} & H_\omega
\end{tikzcd}\]
for all $n$. In applying this scheme to pro-$p$ groups, there are several additional considerations to worry about.
\begin{enumerate}
\item Each graph of groups involved must be confirmed to be proper. If improper `splittings' are allowed, the theory quickly becomes absurd.
\begin{example}[Arbitrarily large improper graphs of finite $p$-groups with fundamental group $\F_p\amalg \F_p$]\label{ArbLargeImprop}
Let $G_n$ be a copy of the mod-$p$ Heisenberg group
\[ G_n = \big<a_n, b_n\,\big|\, a_n^p=b^p_n=[a_n, b_n]^p=1, [a_n, b_n]\text{ central} \big>\]
and let $K_n$ be a copy of $\F_p^2$ generated by $u_n$ and $v_n$. Take inclusions
\[\begin{matrix} K_n\hookrightarrow G_{n-1}, & u_n\mapsto b_{n-1},& v_n\mapsto [a_{n-1}, b_{n-1}] \\K_n\hookrightarrow G_n, & u_n\mapsto [a_{n}, b_{n}],& v_n\mapsto a_n \end{matrix}\]
and form the graph of groups 
\[\begin{tikzcd}
 G_1 \ar[dash]{r}{K_1} & G_2 \ar[dash]{r}{K_2} & G_3 \ar[dash]{r}{K_3}& \cdots\ar[dash]{r}{K_{N-1}} &G_N
\end{tikzcd} \]
for $N>1$. In the fundamental group $G$ of this graph of groups we have identities
\[b_{n-1}=[a_{n}, b_{n}],\quad [a_{n-1}, b_{n-1}] = a_n \]
for $2\leq n\leq N$. Iterating these relations we find that $a_n$ and $b_{n-1}$ may be expressed as commutators of arbitrary length; in the pro-nilpotent group $G$ this forces them to be trivial. Thus $G$ is generated only by the order $p$ elements $a_1$ and $b_N$ and is readily seen to be $\F_p\amalg \F_p$.
\end{example}
\item Not every increasing union of finite $p$-groups embeds in a finitely generated pro-$p$ group. Furthermore, care must be taken to embed not only the union of the $H_i$ in a finitely generated pro-$p$ group, but also their closure in $P$. Two embeddings of such a union in different pro-$p$ groups could well have non-isomorphic closures.
\begin{example}[An increasing union of finite $p$-groups not embeddable in any pro-$p$ group]
Consider $\Gamma = \{z\in \mathbb{C} \mid z^{p^n}=1\text{ for some }n\}$, which is a union of finite cyclic $p$-groups. We claim that any map from $\Gamma$ to a pro-$p$ group is trivial. It suffices to check this for finite $p$-groups. Let $\phi\colon\Gamma\to P$ be a map to a finite $p$-group. Each $g\in \Gamma$ has a $|P|^{\rm th}$ root $h$, so that $\phi(g)=\phi(h)^{|P|}=1$.
\end{example}
\begin{example}[Distinct pro-$p$ groups generated by the same union of finite $p$-groups]
Consider the following two abelian pro-$p$ groups. Let $H'=\pRm[\F_p]{\Z[p]}$ be the free $\F_p$ module on the profinite space $\Z[p]$. Let $\widetilde \Z$ be Alexandroff compactification of $\Z$---the set $\Z$ compactified at a single point $\ast$---and let $H= \pRm[\F_p]{(\widetilde\Z, \ast)}$ be the free $\F_p$-module on the pointed profinite space $(\widetilde\Z, \ast)$. See \cite[Chapter 5]{RZ00} for information on free profinite modules. By examining the standard inverse limit representations
\[\pRm[\F_p]{(\widetilde\Z, \ast)}=\varprojlim_k \F_p[(\{-k\ldots, k,\ast\}, \ast)], \qquad\pRm[\F_p]{\Z[p]} = \varprojlim_n \F_p[\Z/p^n] \]
of these two groups one verifies that they are generated by the union of the finite subgroups $\F_p[\{-k,\ldots,k\}]$. 

However there is no isomorphism from $H$ to $H'$ preserving this family of subgroups. Since $0$ is an isolated point of $(\widetilde\Z, \ast)$ there is a projection $H\to \F_p[\{0\}]$ killing the other factors $\F_p[\{k\}]$ for $k\neq 0$. However in the indexing set for $H'$ the point $0$ is in the closure of $\Z\smallsetminus \{0\}$, hence any map killing all the factors $\F_p[\{k\}]$ for $k\neq 0$ must also kill $\F_p[\{0\}]$.

\end{example}
\item In dealing with graphs of profinite groups, not only the groups but also the {\em graphs} involved must be profinite to retain a sensible topology. Hence forming a graph of groups such as \eqref{infgofgs} is an invalid operation. Our construction will use an inverse limit of finite graphs of groups. This can also be rephrased as a profinite graph of pro-$p$ groups which is a certain `compactification' of the graph of groups \eqref{infgofgs}, though we will not make this explicit. 
\end{enumerate} 
\subsection{Definition of groups}
Now we will describe the promised inaccessible pro-$p$ group. We will construct a diagram of pro-$p$ groups as in Dunwoody's construction, together with retraction maps which will allow us to define an inverse limit. Assume for this discussion that all graphs of groups given are proper---we will verify this in the next subsection.

First define the map \[\mu_n\colon \{0, \ldots, p^{n+1}-1 \}\to \{0, \ldots, p^{n}-1 \}\]
by sending an integer to its remainder modulo $p^n$. Define
\[H_n = \F_p[\{0, \ldots, p^{n}-1 \}]  \]
to be the $\F_p$-vector space with basis $\{h_0,\ldots, h_{p^n-1}\}$. There are inclusions $H_n\subseteq H_{n+1}$ given by inclusions of bases, and retractions $\eta_n\colon H_{n+1}\to H_n$ defined by $h_k\mapsto h_{\mu_n(k)}$. 

Note also that there is a natural action of $\Z/p^n$ on $H_n$ given by cyclic permutation of the basis elements, and that these actions are compatible with the retractions $\eta_i$. The inverse limit of the $H_n$ along these retractions is $H_\infty = \pRm[\F_p]{\Z[p]}$. Since the positive integers are dense in \Z[p], the group $H_\infty$ is generated by the images of the $H_n$ under inclusion. The continuous action of $\Z[p]$ on the given basis of $H_\infty$ allows us to form a sort of `pro-$p$ lamplighter group'
\[ H_\omega = \pRm[\F_p]{\Z[p]}\rtimes \Z[p] = \varprojlim (H_n\rtimes \Z/p^n) \]
which is a pro-$p$ group into which $H_\infty$ embeds. As will be seen later, $H_\omega$ is finitely generated.

Next set \[K_n = \F_p\times H_n = \gp{k_n}\times H_n.\] Set $G_1= K_1 \times \F_p$. For $n>1$, let $G_n$ be the pro-$p$ group with presentation
\[\begin{split}
G_n=\big<k_{n-1}, k_n,& h_0, \ldots, h_{p^n-1} \,|\, k_i^p = h_i^p =1,h_i\leftrightarrow h_j,\\ & k_{n-1} \leftrightarrow h_i \text{ for all } i\neq p^{n-1},  k_n = [k_{n-1}, h_{p^{n-1}}] \text{ central}\big>
\end{split} \]
where $\leftrightarrow$ denotes the relation `commutes with'. One may show that in fact $G_n$ is isomorphic to the finite $p$-group whose underlying set is
\begin{equation*} \label{GnExpl}
G_n = \{(u_{n-1}, u_n, x_0, \ldots, x_{p^n-1}) \in \F_p^{2+p^n}\} 
\end{equation*}
with operation 
\[\begin{split} (u_{n-1}, u_n, x_0,& \ldots, x_{p^n-1})\star (v_{n-1}, v_n, y_0, \ldots, y_{p^n-1}) =\\ &(u_{n-1}+v_{n-1}, u_n + v_n + u_{n-1}y_{p^{n-1}}, x_0+y_0, \ldots, x_{p^n-1}+y_{p^n-1}) \end{split}\] 
where the $u_i$-coordinate represents $k_i$ and the $x_i$-coordinate represents $h_i$.

 The choice of generator names describes maps $H_n\to G_n$, $K_{n-1}\to G_n$, and $K_n\to G_n$. Using the explicit description of $G_n$ one may easily see that all three of these maps are injections. Define a retraction map $\rho_n\colon G_n\to K_{n-1}$ by killing $k_n$ and by sending $h_k\mapsto h_{\mu_{n-1}(k)}$. Note that $\rho_n$ is compatible with $\eta_n\colon H_n\to H_{n-1}$---that is, there is a commuting diagram 
\[\begin{tikzcd}
K_{n-1} \ar[hook,shift left =1]{r} & \ar[shift left = 1]{l}{\rho_n} G_n\\
H_{n-1} \ar[hook]{u} \ar[hook,shift left =1]{r} & \ar[shift left = 1]{l}{\eta_n} H_n \ar[hook]{u}
\end{tikzcd}\]

Let $Q_{n,m}$ be the fundamental pro-$p$ group of the graph of groups
\begin{equation}\label{Pdefn}\begin{tikzcd}
Q_{n,m}\colon &  G_{n+1} \ar[dash]{r}{K_{n+1}} & G_{n+2} \ar[dash]{r}{K_{n+2}} & \cdots \ar[dash]{r}{K_{n+m-1}} & G_{n+m}
\end{tikzcd}  
\end{equation} 
and let $P_m = Q_{0,m}$. By Proposition \ref{CollapsingSubgs} the group $Q_{n,m}$ includes naturally in $Q_{n,m+1}$. Note that the retractions $\rho_n\colon G_n\to K_{n-1}$ induce retractions $\pi_{n,m}\colon Q_{n,m+1}\to Q_{n,m}$ for all $n$ and $m$:
\[\begin{tikzcd}
Q_{n,m+1}\colon \ar{d}{\pi_{n,m}}&  G_{n+1} \ar[dash]{r}{K_{n+1}} \ar{d}{\id}& \cdots \ar[dash]{r}{K_{n+m-1}} & G_{n+m} \ar[dash]{r}{K_{n+m}} \ar{d}{\id}& G_{n+m+1}\ar{d}{\rho_{n+m+1}} \\
Q_{n,m}\colon \ar[shift left =2, hook]{u}&  G_{n+1} \ar[dash]{r}{K_{n+1}} \ar[shift left =2, hook]{u}& \cdots \ar[dash]{r}{K_{n+m-1}} & G_{n+m}\ar[dash]{r}{K_{n+m}} \ar[shift left =2, hook]{u}& K_{n+m}\ar[shift left =2, hook]{u}
\end{tikzcd}\]
Define $Q_{n,\infty}$ to be the inverse limit of the $Q_{n,m}$ along these retractions, and set $P = Q_{0,\infty}$. 

By Proposition \ref{CollapsingSubgs} we have an identity $P_n\amalg_{K_n} Q_{n,m} = P_{n+m}$. Taking an inverse limit with Proposition \ref{invlimgofgs} we find that $P$ has a graph of groups decomposition
\begin{equation}\label{Pinacc}
 \begin{tikzcd}
P\colon &  G_{1} \ar[dash]{r}{K_{1}} & G_{2} \ar[dash]{r}{K_{2}} & \cdots \ar[dash]{r}{K_{m-1}} & G_{m} \ar[dash]{r}{K_{m}} &Q_{n,\infty}
\end{tikzcd}
\end{equation} 
for all $n$; so $P$ is inaccessible. 

To complete the last part of Dunwoody's construction we are required to take an amalgamated product with the finitely generated group $H_\omega$ to restore finite generation. Because the $H_n$ generate $H_\infty$ and there is an injection $H_\infty \to P$ defined to be the inverse limit of the injections $H_n\to P_n$, it follows that the closure of the union of the $H_n$ in $P$ is isomorphic to $H_\infty$. So consider the amalgamated free product $J=P\amalg_{H_\infty} H_\omega$. Both $P$ and $H_\omega$ are defined as inverse limits whose transition maps both restrict to the maps $\eta_n$ on $H_n$. Hence we may write
\[J = P \amalg_{H_\infty} H_\omega = \varprojlim_n P_n \amalg_{H_n} (H_n\rtimes \Z/p^n).\]
The graphs of groups attesting that $J$ is inaccessible are the splittings
\begin{equation}\label{Jinacc}
\begin{tikzcd}
J\colon &  G_{1} \ar[dash]{r}{K_{1}} & G_{2} \ar[dash]{r}{K_{2}} & \cdots \ar[dash]{r}{K_{n-1}} & G_{n} \ar[dash]{r}{K_{n}} &Q_{n,\infty} \ar[dash]{r}{H_\infty} & H_\omega\end{tikzcd}
\end{equation}
for each $n$. By Proposition \ref{invlimgofgs} these are the inverse limits of the graphs of groups
\begin{equation}\label{Jinvlt}
\begin{tikzcd}
G_{1} \ar[dash]{r}{K_{1}} & G_{2} \ar[dash]{r}{K_{2}} & \cdots \ar[dash]{r}{K_{n-1}} & G_{n} \ar[dash]{r}{K_{n}} &Q_{n,m} \ar[dash]{r}{H_{n+m}} & H_{n+m}\rtimes \Z/p^{n+m}.
\end{tikzcd} 
\end{equation}

\subsection{Properness}\label{subsecProper}
We must now show that the various graphs of groups introduced in the previous section are proper.

First consider the graph of groups \eqref{Pdefn}. It suffices to deal with the case $n=0$ since a subgraph of a proper graph of groups is proper. We exhibit an explicit finite quotient of $P_n$ into which all the $G_i$ inject for $1\leq i\leq n$, thereby establishing properness. The group presentation 
\[\begin{split}
F_n = \big<k_1, \ldots, k_{n-1}, k_n, h_0, \ldots, h_{p^n-1} \,|\,& k_i^p = h_i^p =1, k_i \leftrightarrow k_j, h_i\leftrightarrow h_j,\\ & k_i \leftrightarrow h_j \text{ for all } j\neq p^{i},  k_{i+1} = [k_i, h_{p^{i}}]\big>
\end{split} \]
defines a finite $p$-group with an explicit form similar to the description of $G_n$ given above: specifically the set 
\[F_n = \{((u_1,\ldots, u_n), (x_0, \ldots, x_{p^n-1}))\in \F_p^n \times \F_p^{p^n}\} \]
with multiplication 
\[(\underline u, \underline{x})\star (\underline{v}, \underline{y}) = ((u_i + v_i + u_{i-1}y_{p^{i-1}}), \underline{x}+ \underline{y}). \]
The choice of generator names specifies natural maps from the $G_i$ and $K_i$ into $F_n$, compatible with the inclusions of $K_i$ into $G_i$ and $G_{i+1}$. This therefore describes a map $P_n\to F_n$, and via the explicit description of $F_n$ one readily sees that the $G_i$ inject into $F_n$ under this map. This shows that the graph of groups \eqref{Pdefn} is proper. An application of Proposition \ref{invlimgofgs} shows that the inverse limit \eqref{Pinacc} of these is also proper.

To show that the graphs of groups \eqref{Jinvlt} are proper, we require a finite quotient of $P_{n} \amalg_{H_n} H_n\rtimes (\Z/p^n)$ into which all the $G_i$ and $H_n\rtimes \Z/p^n$ inject. Such a finite group $E_n$ is given by the presentation below; one could provide an explicit finite $p$-group isomorphic to it (similar to the treatments of $G_n$ and $F_n$ above), but we leave this to the reader with a penchant for such activities. The group presentation is
\begin{multline*}
E_n = \big< k_{i,r} \,(1\leq i\leq n, 0\leq r<p^n), h_0, \ldots, h_{p^{n}-1}, t \,\mid\, k_{i,r}^p= h_i^p= t^{p^n}=1,\\
k_{i,r}\leftrightarrow k_{j,s},\quad h_i\leftrightarrow h_j,\quad k_{i,r}\leftrightarrow h_j \text{ if $j\neq p^i + r$},\\ k_{i+1, r} = [k_{i,r}, h_{p^i+r}],\quad
t^{-1} k_{i,r} t = k_{i, r+1},\quad t^{-1}h_jt = h_{j+1} \big>
\end{multline*}
where the indexing set of the $h_i$ should be taken modulo $p^n$, as should the $r$-coordinate of the indexing set of the $k_{i,r}$. By Proposition \ref{invlimgofgs} we now also know that the graph of groups \eqref{Jinacc} is proper for each $n$.
\subsection{Conclusion}
By construction, $J$ is expressed as the fundamental group of arbitrarily large reduced graphs of groups 
\eqref{Jinacc}, which we now know to be proper---that is, $J$ is an inaccessible group. It only remains to argue that $J$ is finitely generated. 

As per Dunwoody's construction, $J$ is generated by $G_1$ and $H_\omega$. By definition $J$ is generated by $H_\omega$ and the $G_n$, and each $G_n$ is generated by $K_{n-1} \subseteq G_{n-1}$ and $H_n$; so by induction the subgroup of $J$ generated by $G_1$ and $H_\omega$ contains all the $G_n$. 

The finite group $G_1$ is of course finitely generated. The `pro-$p$ lamplighter group' $H_\omega$ is also finitely generated---it is generated by the two elements 
\[ (h_0 , 0), (0, t) \in \pRm[\F_p]{\Z[p]}\rtimes \Z[p] \] 
where $t$ denotes a generator of the second factor $\Z[p]$ in the semidirect product. That these elements do indeed generate the group follows from the fact that their images generate each term in the inverse limit 
\[H_\omega=\varprojlim (H_n\rtimes \Z/p^n)\]
defining $H_\omega$. 

Since both $G_1$ and $H_\omega$ are finitely generated, so is $J$. We have now proved our primary goal, the following theorem.
\begin{theorem}\label{thmInacc}
There exists a finitely generated inaccessible pro-$p$ group $J$.
\end{theorem}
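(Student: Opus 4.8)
The plan is to collect the facts assembled over the preceding subsections and to check that the group $J=P\amalg_{H_\infty}H_\omega$ has the two required properties: inaccessibility and finite generation. Nearly all of the substantive work---the explicit diagram of finite $p$-groups, the retraction maps defining the inverse limits $P$ and $H_\omega$, and above all the properness verifications of Subsection \ref{subsecProper}---is already in place, so what remains is bookkeeping together with one point of care about edge groups.

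For inaccessibility I would start from the family of splittings \eqref{Jinacc}. For each $n$ this is a graph of groups over a segment whose edge groups are the finite groups $K_1,\dots,K_n$ together with the single infinite group $H_\infty$; it is proper by the closing line of Subsection \ref{subsecProper}, and reduced because there are no loops and every edge group is a proper subgroup of each incident vertex group. Since accessibility is phrased in terms of \emph{finite} edge groups, I would then collapse the terminal edge carrying $H_\infty$: applying Proposition \ref{CollapsingSubgs} to the connected subgraph supported on $Q_{n,\infty}$ and $H_\omega$ produces a proper splitting
\[
\begin{tikzcd}
J\colon & G_1 \ar[dash]{r}{K_1} & \cdots \ar[dash]{r}{K_{n-1}} & G_n \ar[dash]{r}{K_n} & Q_{n,\infty}\amalg_{H_\infty}H_\omega
\end{tikzcd}
\]
which is reduced, finite, and carries only the $n$ finite edge groups $K_1,\dots,K_n$. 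As $n$ ranges over the positive integers the edge count is unbounded, so no number $n(J)$ can bound the number of edges of a proper reduced finite splitting of $J$ over finite groups, and $J$ is inaccessible.

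For finite generation I would follow Dunwoody's scheme and show that $J$ is topologically generated by the two finitely generated subgroups $G_1$ and $H_\omega$. Here $G_1$ is finite, while $H_\omega=\pRm[\F_p]{\Z[p]}\rtimes\Z[p]$ is generated by $(h_0,0)$ and $(0,t)$, as one verifies on each finite quotient $H_n\rtimes\Z/p^n$ before passing to the inverse limit. An induction on $n$ then shows that the closed subgroup generated by $G_1$ and $H_\omega$ contains every $G_n$: reading off the presentation, $G_n$ is generated by $K_{n-1}\subseteq G_{n-1}$ together with $H_n\subseteq H_\infty\subseteq H_\omega$, the remaining generator $k_n=[k_{n-1},h_{p^{n-1}}]$ being a consequence of these. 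As $J$ is topologically generated by $H_\omega$ and the $G_n$, it follows that $G_1$ and $H_\omega$ generate $J$, which is therefore finitely generated.

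The genuine obstacle lies not in this final assembly but in the hypotheses it silently invokes, and I expect the delicate point to be the one flagged by the introductory examples: that the amalgamation $P\amalg_{H_\infty}H_\omega$ be formed over the \emph{correct} closed subgroup. One must know that the closure of $\bigcup_n H_n$ inside $P$ really is $H_\infty=\pRm[\F_p]{\Z[p]}$ rather than some larger or differently structured pro-$p$ group, so that gluing on the finitely generated $H_\omega$ restores finite generation without disturbing the splittings. This is secured by building $P=\varprojlim_n P_n$ along transition maps that restrict to the $\eta_n$ on each $H_n$, so that the inverse-limit descriptions of $P$ and of $H_\omega$ are compatible and $J=\varprojlim_n\,P_n\amalg_{H_n}(H_n\rtimes\Z/p^n)$; the properness of every graph of groups in sight, guaranteed by Propositions \ref{invlimgofgs} and \ref{CollapsingSubgs}, is what makes each collapse and inverse limit legitimate.
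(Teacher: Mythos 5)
Your proof is correct and follows essentially the same route as the paper's: the splittings \eqref{Jinacc} witness inaccessibility, and finite generation follows from $J$ being generated by $G_1$ together with the two-generated group $H_\omega$. Your explicit collapse of the $H_\infty$-edge via Proposition \ref{CollapsingSubgs}, producing splittings with only the finite edge groups $K_1,\dots,K_n$, is a small refinement worth keeping: the paper cites \eqref{Jinacc} directly even though that graph carries the infinite edge group $H_\infty$, so your version matches the definition of accessibility more precisely.
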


\section{A residually $p$-finite inaccessible group}\label{secRespInacc}
The focus of this paper has been pro-$p$ groups. However the constructions made above also allow us to give, to the best of the author's knowledge, the first example in the literature of a finitely generated inaccessible {\em discrete} group which also has the property that it is residually $p$-finite (or even residually finite). We now describe this, with a brief proof.

Let the groups $G_i$, $K_i$, $H_i$, and $E_i$ be as in the previous section. Define $P_n$ to be the (discrete) fundamental group of the graph of groups
\[\begin{tikzcd}
P_n\colon &  G_{1} \ar[dash]{r}{K_{1}} & G_{2} \ar[dash]{r}{K_{2}} & \cdots \ar[dash]{r}{K_{n-1}} & G_{n} 
\end{tikzcd}\]
and let $P$ be the (discrete) fundamental group of the infinite groups 
\begin{equation*}
\begin{tikzcd}
P\colon &  G_{1} \ar[dash]{r}{K_{1}} & G_{2} \ar[dash]{r}{K_{2}} & \cdots \ar[dash]{r}{K_{m-1}} & G_{m} \ar[dash]{r}{K_{n}} &\cdots
\end{tikzcd}
\end{equation*} 
Let $H_\infty = \F_p[\N]=\bigcup_{n} H_n$, and let $H_\omega= \F_p[\Z{}]\rtimes \Z$.  As per Dunwoody's original construction, the amalgamated free product $J=P\ast_{H_\infty} H_\omega$ is an inaccessible finitely generated discrete group. 

To show that $J$ is residually $p$-finite, let $j\in J\smallsetminus\{1\}$. The element $J$ is given by a reduced word---that is, an expression
\[ j= a_1b_1\cdots a_mb_m\]
where 
\begin{itemize}
\item $a_i\in P$ for all $i$ and $b_i\in H_\omega$ for all $i$;
\item no $a_i$ or $b_i$ is trivial, except possibly $a_1$ or $b_m$; and
\item no non-trivial $a_i$ or $b_i$ is contained in $H_\infty$ except possible $a_1$.
\end{itemize}
Now $P$ is the union of the $P_n$, so for all $n$ sufficiently large all $a_i$ are contained in $P_n$---and therefore are fixed by the retraction $P\to P_n$. For all $n$ sufficiently large, the images of the $b_i$ under the quotient map $H_\omega \to H_n\rtimes \Z/p^n$ do not lie in $H_n$. Therefore for all $n$ sufficiently large, the image $j'$ of $j$ in $P_n \ast_{H_n} (H_n\rtimes \Z/p^n)$ is given by a reduced word, hence is non-trivial. This latter group is residually $p$-finite; this follows from Section \ref{subsecProper} since the kernel of the map $P_n \ast_{H_n} (H_n\rtimes \Z/p^n) \to E_n$ is free, hence is residually $p$-finite. Therefore there is a finite $p$-group quotient $\phi\colon P_n \ast_{H_n} (H_n\rtimes \Z/p^n) \to L$ such that $\phi(j')\neq 1$. Thus the composition
\[J \to P_n \ast_{H_n} (H_n\rtimes \Z/p^n) \to L \]
does not kill $j$. It follows that $J$ is residually $p$-finite as required.
\begin{theorem}\label{ThmRespInacc}
There exists a finitely generated, inaccessible, residually $p$-finite discrete group.
\end{theorem}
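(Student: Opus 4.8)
The plan is to re-run Dunwoody's construction \cite{Dunwoody93} over the discrete category using exactly the building blocks $G_i$, $K_i$, $H_i$ already assembled above, and then to harvest residual $p$-finiteness from the finite $p$-group quotients $E_n$ produced in Section \ref{subsecProper}. Finite generation and inaccessibility require no new work: since each $G_{n+1}$ is generated by $K_n\subseteq G_n$ together with $H_{n+1}$, the group $J=P\ast_{H_\infty}H_\omega$ is generated by $G_1$ and the (two-generated) lamplighter group $H_\omega=\F_p[\Z{}]\rtimes\Z$, hence is finitely generated; and the splittings of $J$ over the finite groups $K_n$ furnished by Dunwoody's scheme show directly that $J$ is inaccessible. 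So the real content, and the only genuinely new part, is residual $p$-finiteness.

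For that, the key input is that each finite-stage amalgam $J_n := P_n\ast_{H_n}(H_n\rtimes\Z/p^n)$ is itself residually $p$-finite. This is immediate from Section \ref{subsecProper}: the construction there gives a homomorphism $J_n\to E_n$ onto a finite $p$-group into which all vertex groups inject, so its kernel acts freely on the Bass--Serre tree of $J_n$ and is therefore free; free groups are residually $p$, whence $J_n$ is residually $p$. The plan is thus to reduce the separation of an arbitrary $1\neq j\in J$ to a separation inside some $J_n$.

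Concretely, I would fix a reduced expression $j=a_1b_1\cdots a_mb_m$ for $j$ in the amalgam $P\ast_{H_\infty}H_\omega$, with $a_i\in P$ and $b_i\in H_\omega$, the interior syllables lying outside $H_\infty$. There is a map $J\to J_n$ assembled from the retraction $P\to P_n$ (available because $P=\bigcup_nP_n$ and the maps $\rho$ fold the tail back) together with the reduction $H_\omega\to H_n\rtimes\Z/p^n$, the two agreeing on $H_\infty$. I would then argue that for all sufficiently large $n$ the image $j'$ of $j$ in $J_n$ is again presented by a reduced word, hence nontrivial by the normal form theorem for amalgams; composing with a finite $p$-quotient $J_n\to L$ separating $j'$ from the identity then yields a finite $p$-quotient of $J$ in which $j$ survives.

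The hard part will be verifying that the reduced word survives the passage to $J_n$. The $P$-syllables are straightforward: as there are only finitely many $a_i$ and $P=\bigcup_nP_n$, for $n$ large every $a_i$ already lies in $P_n$ and is fixed by the retraction, while the interior $a_i$, lying outside $H_\infty\supseteq H_n$, remain nontrivial modulo the amalgamated subgroup $H_n$. The delicate step is controlling the $H_\omega$-syllables: I must check that for $n$ large the image of each interior $b_i$ under $H_\omega\to H_n\rtimes\Z/p^n$ lies outside $H_n$, so that the alternation of syllables, and hence reducedness, is preserved. This is where the precise interplay between the amalgamating subgroup $H_\infty=\F_p[\N]$, the $\Z$-component of a lamplighter element, and the cyclic reduction $\Z\to\Z/p^n$ must be pinned down, and it is the one point of the argument that genuinely uses the specific structure of $H_\omega$ rather than formal properties of the construction.
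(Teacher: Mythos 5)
Your proposal is essentially the paper's own proof: the same amalgam $J=P\ast_{H_\infty}H_\omega$, the same reduction of a reduced word for $j\neq 1$ to the finite-stage amalgam $P_n\ast_{H_n}(H_n\rtimes\Z/p^n)$ via the retraction $P\to P_n$ and the quotient $H_\omega\to H_n\rtimes\Z/p^n$, and the same appeal to the freeness of the kernel of the map to $E_n$ for residual $p$-finiteness of the finite stages. The one step you flag as delicate is precisely the one the paper dispatches in a single sentence---for a syllable $b_i=(f,k)$ with $k\neq 0$ the image of $k$ in $\Z/p^n$ is nonzero once $p^n>|k|$, so the image escapes $H_n$---and your caution is warranted, since for an interior $H_\omega$-syllable with trivial $\Z$-component but $f$ supported partly on negative integers (so $b_i\notin H_\infty=\F_p[\N]$) the folded image does land in $H_n$ for every $n$, a case the paper's one-line assertion does not address either.
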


\subsection*{Acknowledgements}
The author was supported by a Junior Research Fellowship from Clare College, Cambridge.

\bibliographystyle{alpha}
\bibliography{AccMain.bib}
\end{document}